\theoremstyle{definition}
\newtheorem{de}{Definition}[section]
\newtheorem{rem}[de]{Remark}
\newtheorem{ex}[de]{Example}
\theoremstyle{plain}
\newtheorem{thm}[de]{Theorem}
\newtheorem{prop}[de]{Proposition}
\newcommand{\cL}{\cal{L}}
\newcommand{\rank}{\mathop{\mathrm{rank}}\nolimits}
\newcommand{\Q}{\mathbb Q}
\newcommand{\Z}{\mathbb Z}
\newcommand{\N}{\mathbb N}
\newcommand{\lcm}{{\rm lcm}}
\newcommand{\ord}{{\rm ord}}
\newcommand{\ld}{{\rm ld}}
\newcommand{\init}{{\rm init}}
\newcommand{\diag}{{\rm diag}}
\newcommand{\LL}{{\cal L}}
\newenvironment{algorithm}[1]{
  \begin{center}
    {\bf Computation of #1}\\*
     \begin{tabular}{|p{120mm}|} \hline
} {
 \\ \hline
 \end{tabular}
 \end{center}
}
\newcommand{\labexzerothomasfirst}{($T_1$)}
\newcommand{\labexzerothomassecond}{($T_2$)}
\newcommand{\labexzerothomasthird}{($T_3$)}
\newcommand{\labexzerothomasfourth}{($T_4$)}
\newcommand{\labexzerothomasfifth}{($T_5$)}
\newcommand{\labexonethomasfirst}{($T_1$)}
\newcommand{\labexonethomassecond}{($T_2$)}
\newcommand{\labexonethomasthird}{($T_3$)}
\newcommand{\labexonethomasfourth}{($T_4$)}
\newcommand{\labexonethomasfifth}{($T_5$)}
\newcommand{\labexonethomassixth}{($T_6$)}
\newcommand{\labexfivethomasfirst}{($T_1$)}
\newcommand{\labexfivethomassecond}{($T_2$)}
\newcommand{\labexfivethomasthird}{($T_3$)}
\newcommand{\labexsixthomasfirst}{($T_1$)}
\newcommand{\labexsixthomassecond}{($T_2$)}
\newcommand{\labexsixthomasthird}{($T_3$)}
\newcommand{\labexsixthomasfourth}{($T_4$)}
\newcommand{\labexsixthomasfifth}{($T_5$)}
\journal{Advances in Applied Mathematics}
\begin{document}

\begin{frontmatter}



\title{Lagrangian Constraints and Differential Thomas Decomposition}


\author[vpg]{Vladimir P. Gerdt\corref{cor1}}
\ead{gerdt@jinr.ru}
\cortext[cor1]{Corresponding author}
\address[vpg]{Laboratory of Computing Techniques and Automation, Joint Institute for Nuclear Research,\\ 141980 Dubna, Russia}

\author[dr]{Daniel Robertz}
\ead{daniel.robertz@plymouth.ac.uk}
\address[dr]{Centre for Mathematical Sciences, Plymouth University, 2-5 Kirkby Place, Drake Circus,\\ Plymouth PL4~8AA, UK}

\begin{abstract}
In this paper we show how to compute algorithmically the
full set of algebraically independent constraints
for singular mechanical and field-theoretical models
with polynomial Lagrangians. If a model under consideration is not
singular as a whole but has domains of dynamical (field) variables where
its Lagrangian becomes singular, then our approach allows to detect
such domains and compute the relevant constraints. In doing so, we
assume that the Lagrangian of a model is a differential polynomial and apply the
differential Thomas decomposition algorithm to the
Euler-Lagrange equations.
\end{abstract}

\begin{keyword}
Singular Lagrangians, Lagrangian constraints \sep Euler-Lagrange equations \sep Thomas decomposition\sep differential algebra



\end{keyword}

\end{frontmatter}


%

\section{Introduction}\label{sec:introduction}

Models with singular Lagrangians play a fundamental role in modern elementary particle physics and quantum field theory~(cf.~\citep{Wipf,Rothe}).
Singularity of such models is caused by local symmetries of their Lagrangians. Gauge symmetry is the most important type of local symmetries and it is imperative for all physical theories of fundamental interactions. The local symmetry transformations of differential equations describing dynamical systems or field-theoretical models relate their solutions satisfying the same initial (Cauchy) data. For dynamical systems with only one independent variable the initial data include (generalized) coordinates and velocities whereas for field-theoretical models they include the field variables, their spatial and the  first-order temporal derivatives (`velocities'). The presence of local symmetries in a singular model implies that its general solution satisfying the initial data depends on arbitrary functions.

A distinctive feature of singular Lagrangian models is that their dynamics is governed by the Euler-Lagrange equations which have differential consequences in the form of internal (hidden) constraints for the initial data. This is in contrast to regular constrained dynamics whose constraints are external with respect to the Euler-Lagrange equations.

Given a model Lagrangian, it is very important to verify whether it is singular,
and if so to compute internal constraints that are hidden in the Euler-Lagrange equations.
Knowledge of such constraints is necessary in constrained dynamics for the local symmetry analysis~\citep{Wipf} and for quantization~\citep{Rothe} of the singular model. As it was observed first in~\citep{SeilerCD1,SeilerCD2} in the framework of the Dirac formalism for investigation and quantization of constrained dynamical systems and gauge field theories, the Dirac procedure of the constraint calculation is nothing else than completion of the Hamiltonian equation system to involution. In this respect, the Hamiltonian constraints are \emph{integrability conditions}~\citep{Seiler}. The algorithmic aspects of computation and separation of Hamiltonian constraints into the first and second classes were studied in~\citep{GG}. Computation of independent Lagrangian constraints as integrability conditions for the Euler-Lagrange equations is essential for well-posedness of initial value problems~\citep{Seiler,Finikov,Deriglazov}. Such constraints are detected in the course of completion of the Euler-Lagrange equations to involution.

In the present paper we consider Lagrangian models whose Lagrangians (mechanics) and
Lagrangian densities (field theory) are differential polynomials. Under this
condition we show that the Thomas decomposition~\citep{BGLHR,Robertz}, being a
characteristic one (cf.~\citep{Hubert}) for the radical differential ideal generated
by the polynomials in Euler-Lagrange equations, provides an algorithmic tool for the
computation of Lagrangian constraints. Unlike the traditional linear algebra based
methodology used in theoretical and mathematical physics~\citep{Wipf} for
computing linearly independent Lagrangian constraints, our approach is not only
fully algorithmic but also takes into account rank dependence of the Hessian matrix
on the dynamical (field) variables and outputs the complete set of algebraically
independent constraints. The Thomas decomposition splits the system of
Euler-Lagrange equations into a set of involutive subsystems and automatically
determines the complete set of constraints for each output subsystem which has a
certain rank deficiency of the Hessian. Moreover, each output subsystem admits a
formally well-posed Cauchy problem~\citep{Seiler,Finikov,Gerdt10} in terms of the
output constraints. In contrast to other types of characteristic decompositions
of (radical) differential ideals, a Thomas decomposition consists of differential
systems whose solution sets are pairwise disjoint. This is accomplished by the
use of inequations (cf.\ Example~\ref{ex:notdisjoint}), allowing an irredundant
description of the cases exhibiting different sets of constraints.

This paper is organized as follows. Section~\ref{sec:preliminaries} contains a
short description of our research objects. In Section~\ref{sec:linearalgebra}
we outline the standard approach~\citep{Wipf} to the computation of Lagrangian
constraints. Our approach to this problem is based on the general algorithmic technique
of differential Thomas decomposition~\citep{BGLHR,Robertz} outlined in
Section~\ref{sec:thomasdecomposition}. In Section~\ref{sec:application} we adopt
this technique to the application to constrained dynamics and give a rigorous algebraic description of this adaptation.
In Section~\ref{sec:examples} we illustrate its application by a number of examples including both field-theoretic and dynamical system models. Some concluding remarks are given in Section~\ref{sec:conclusion}.

\section{Preliminaries}\label{sec:preliminaries}

In the framework of their field-theoretical description
most of the fundamental laws of physics can be understood via the \emph{action}~\citep{Wipf,Rothe}
\[
S=\int dt \int d^nx \, {\cal{L}}(\varphi^a,\partial_{t}\varphi^a,\partial_{x_1}\varphi^a,\ldots,\partial_{x_n}\varphi^a)\,,
\]
where ${\cL}$ is a Lagrangian density depending on the \emph{field variables}
\[
\varphi^a=\varphi^a(x_1,\ldots,x_n,t)\,, \quad a\in \{1,\ldots,m\}\, ,
\]
and their first-order partial derivatives. In terms of its density ${\cL}$
the \emph{Lagrangian} $L$ is expressed as
\[ 	
 L=\int d^nx \, {\cal{L}}(\varphi^a,\partial_{t}\varphi^a,\partial_{x_i}\varphi^a,\ldots,\partial_{x_n}\varphi^a)\,.
\]
The principle of least action (Hamilton's variational principle) implies the (field) Euler-Lagrange equations
\begin{equation}\label{field}
\partial_{x_\mu} \frac{\partial {\cL}}{\partial (\partial_{x_\mu} \varphi^a)}-\frac{\partial {\cL}}{\partial \varphi^a}=0,\,\ a\in \{1,\ldots,m\}\,, \ \mu\in \{0,\ldots,n\}\,,\ \ x_0\equiv t\,. 	
\end{equation}

Here and in what follows we use Einstein's summation convention, i.e.,
summation over all possible values of the repeated indices occurring
in a single term is assumed.

For dynamical systems the action and the Euler-Lagrange equations read~\citep{Wipf,Rothe,Deriglazov}
\begin{equation}\label{mechanics}
S=\int dt \, L(q^a,q^a_t)\,,\quad  \frac{d}{dt} \left(\frac{\partial L}{\partial q^a_t}\right) - \frac{\partial L}{\partial q^a}=0\,,\quad a\in \{1,\ldots,m\}\,,
\end{equation}
where $q^a$ are (generalized) \emph{coordinates} and $q^a_t\equiv {dq^a}/dt$ are their \emph{velocities}\,.

A Lagrangian is \emph{regular} if its Hessian matrix or Hessian defined as
\begin{equation}
H_{i,j}=\left\lbrace
\begin{array}{l}
\displaystyle
\frac{\partial^2 L}{{\partial q^i_t} \, {\partial q^j_t}}\quad \mbox{(Dynamical System)}\,, \\[0.5cm]
\displaystyle
\frac{\partial^2 {\cL}}{{\partial \varphi^i_t} \, {\partial \varphi^j_t}}\quad \mbox{(Field-Theoretic Model)}\,,
\end{array}
\right. \label{hessian}
\end{equation}
is invertible, and \emph{singular} otherwise.

\begin{rem}
If a Lagrangian $L$ possesses a continuous group of local gauge transformations, then its Hessian (\ref{hessian}) has a non-trivial nullspace. This fact follows from the generalized Bianchi identity (cf.~\citep{Wipf}). Thus, gauge invariant theories are unavoidably singular.
\end{rem}

Let $K$ be a differential field of characteristic zero. By definition, $K$ is a field containing
the rational numbers $\Q$ with commuting derivations $\delta_0$, $\delta_1$, \ldots,
$\delta_n$, i.e., additive maps $\delta_i: K \to K$ satisfying the Leibniz rule
\[
\delta_i(k_1 \, k_2) = \delta_i(k_1) \, k_2 + k_1 \, \delta_i(k_2) \quad
\mbox{for all } k_1, k_2 \in K, \quad
\mbox{and} \quad \delta_i \circ \delta_j = \delta_j \circ \delta_i \quad
\mbox{for all } i, j.
\]
Moreover, let $R_n = K\{ \varphi^1, \ldots, \varphi^m \}$ be
the \emph{partial differential polynomial ring} with
differential indeterminates $\varphi^1$, \ldots, $\varphi^m$,
endowed with the set $\Delta=\{\partial_t, \partial_{x_1},
\partial_{x_2},\ldots,\partial_{x_n}\}$ of commuting derivations.
In other words, $R_n$ is the polynomial ring
\[
K[\partial_t^i \partial_{x_1}^{j_1} \ldots \partial_{x_n}^{j_n} \varphi^k \mid i, j_1, \ldots, j_n \in \Z_{\ge 0}, k \in \{ 1, \ldots, m \}]
\]
with infinitely many indeterminates, where the action of
$\partial_t$, $\partial_{x_1}$, $\partial_{x_2}$, \ldots, $\partial_{x_n}$
on an indeterminate increases the corresponding exponent $i$, $j_1$, \ldots, $j_n$
by one and on $K$ coincides with the action of $\delta_0$, $\delta_1$, \ldots, $\delta_n$.
We identify $\varphi^i$
with $\partial_t^0 \partial_{x_1}^0 \ldots \partial_{x_n}^0 \varphi^i$ and
also indicate differentiation by subscripts; e.g.,
$\varphi^1_{t,t,x_1} = \partial_t^2 \partial_{x_1} \varphi^1$.
We shall denote by $R_0$ the \emph{ordinary differential polynomial ring} with
differential indeterminates $\varphi^1$, \ldots, $\varphi^m$
and $\Delta=\{\partial_t\}$.

In our study of the Euler-Lagrange equation systems (\ref{field}) and (\ref{mechanics}) we shall restrict  our consideration to field-theoretical and mechanical models such that
\begin{equation}
 {\cL}\in R_n\ \mbox{and}\ L\in R_0\,,\ \mbox{respectively}.\label{dif_pol_ring}
\end{equation}
It should be noted that this restriction is consistent with most of fundamental physical field theories~\citep{Wipf,Rothe} whose (singular) Lagrangian densities are differential polynomials in the field variables and their first-order partial derivatives over a field of constants.

\section{Linear algebra based approach}\label{sec:linearalgebra}

In terms of the Hessian (\ref{hessian}) the set of Euler-Lagrange equations in (\ref{field}) and (\ref{mechanics}) can be written as
\begin{equation}
E:=\{\,e_i=0\mid i=1,\ldots,m\,\}\,,\quad
e_i:=\left\lbrace
\begin{array}{l}
H_{i,j} \, q^j_{tt}+P_i\quad \mbox{(Dynamical System)}\,, \\[0.3cm]
H_{i,j} \, \varphi^j_{tt}+P_i\quad \mbox{(Field-Theoretic Model)}\,,
\end{array}
\right. \label{el-equations}
\end{equation}
where the differential polynomials $H_{i,j}$ and $P_i$ contain derivations in $t$ of order at most $1$.

If the Hessian $H$ is singular, then there are differential consequences of equations in (\ref{el-equations}) that do not contain the second order derivatives in $t$.
These consequences, called \emph{constraints}, are of principal importance in the
study of singular mechanical and field-theoretical models
(cf.\ \citep{Wipf}--\citep{Deriglazov}). Given a Lagrangian, the standard approach
used in physics and mechanics to compute Lagrangian constraints (cf.~\citep{Wipf})
can be formulated as the following procedure:

\begin{algorithm}{constraints based on linear algebra \label{LC}}
\begin{enumerate}[{Step} 1]
\item Compute the Hessian $H$ in accordance to~(\ref{hessian}). Then derive the set $E$ of Euler-Lagrange equations~(\ref{el-equations}) of cardinality $m:=|E|$ and let $C:=\{\,\}$\,.

\item Compute the rank $r$ of the Hessian taking into account equations in $E$.

\item If $r=m$, then go to Step~6. Otherwise, go to the next step.

\item Compute a basis $V$ of the nullspace of $H$, set up
\[
C:=\{\,P_iV^i_\alpha \mid \alpha=1,\ldots,|V|\,\}
 \]
and enlarge the equation set
\[
E:=E\cup \{\,c=0 \mid c\in C\setminus \{0\}\,\}\,.
\]
\item Set $m:=r$ and go to Step~2.

\item Return $C$.
\end{enumerate}
\end{algorithm}

The elements in $C$ that have differential order at most $1$ are called \emph{Lagrangian}~(cf., for example, \citep{Wipf,Krupkova}),
and the set of \emph{Lagrangian constraints} is formed by a largest functionally independent subset.
\vskip 0.5cm
The above procedure suffers from the following algorithmic drawbacks.
\begin{enumerate}
\item \label{drawback1}
The entries of the Hessian are ordinary or partial differential polynomials, i.e.,
we have $H_{i,j}\in R_0$ or $H_{i,j}\in R_n$ (cf.\ Section~\ref{sec:preliminaries}).
If Gaussian elimination is applied in Steps~2 and 4, the computation is
performed in the field of fractions of $R_0$ or $R_n$ in general. Division by
a non-zero differential polynomial which vanishes on the solution set of the
given system has to be prevented. Such differential polynomials are not detected
by the above procedure (cf.\ also \ref{drawback3} below).

On the other hand, the rings $R_0$ and $R_n$ are not principal ideal domains.
For this reason the Hessian $H$ does not admit a Smith normal form in general,
which would provide an algorithmic way to compute $\rank(H)$ in Step~2.
In the case of a multivariate polynomial matrix its Smith normal form can be
defined and computed only in very special cases (cf.~\citep{Boudellioua} and
the references therein) that are not related to the Hessian of the
general form (\ref{hessian}). Similar remarks apply to the computation of a
basis of the nullspace in Step~4 (cf., for example,~\citep{Zhou} and its bibliography).
\item \label{drawback2}
Generally, and we illustrate this fact by examples in Section~\ref{sec:examples},
the rank of the Hessian may vary from one solution subspace of the Euler-Lagrange equations (\ref{field}) or (\ref{mechanics}) to another one. Accordingly, the set of Lagrangian constraints may depend on a solution domain.
\item \label{drawback3}
Computation of $\rank(H)$ in Step~2 assumes simplification (reduction) of the Hessian
modulo (solution of) the system of Euler-Lagrange equations and its extension, respectively. Algorithmically, such a simplification can be performed by means of a characteristic decomposition~\citep{Hubert} of the radical differential ideal generated by the left hand sides of the Euler-Lagrange equations. Thus, the procedure has to be extended with such a kind of decomposition.
\item \label{drawback4}
The output set $C$ of constraints has to be further processed to extract the set
of Lagrangian constraints.
\end{enumerate}

Below we show that differential Thomas decomposition~\citep{BGLHR,Robertz} applied to the Euler-Lagrange equations (\ref{el-equations}) provides a fully algorithmic way to compute an algebraically independent set of Lagrangian constraints that takes into account the dependence of the rank of the Hessian on the solution domain.

\section{Thomas decomposition and simple systems}\label{sec:thomasdecomposition}

We sketch the idea of Thomas decomposition of systems of polynomial ordinary or
partial differential equations (ODEs or PDEs) and its
computation. This method traces back to work by J.~M.\ Thomas in the
1930s~\citep{Thomas}. For more details, we also refer to \citep{BGLHR,Robertz}.

Let
\begin{equation}\label{eq:originalPDE}
p_1 = 0, \quad \ldots, \quad p_s = 0, \quad
q_1 \neq 0, \quad \ldots, \quad q_h \neq 0
\qquad (s, h \in \Z_{\ge 0})
\end{equation}
be a system of polynomial ordinary or partial differential equations
and inequations, i.e., the left hand sides $p_1$, \ldots, $p_s$,
$q_1$, \ldots, $q_h$ are elements of the differential polynomial ring
$R_0$ or $R_n$ over the differential field $K$ of characteristic zero.

A fundamental problem is to determine all power series solutions
of (\ref{eq:originalPDE}). Writing the unknown functions $\varphi^1$,
\ldots, $\varphi^m$ (of either $t$ or of $t$, $x_1$, \ldots, $x_n$) as
power series with indeterminate coefficients, we are led to substitute
this ansatz into (\ref{eq:originalPDE}) and compare coefficients.
However, this naive procedure does not take into account conditions on
the Taylor coefficients of $\varphi^1$, \ldots, $\varphi^m$ which
arise as non-trivial consequences by differentiating the equations
in (\ref{eq:originalPDE}), taking linear combinations of their left
hand sides, etc. Therefore, system (\ref{eq:originalPDE}) needs to be
transformed into an equivalent system which incorporates all
integrability conditions and which is in this sense \emph{formally integrable}~(cf.~\citep{Seiler}).
For nonlinear systems of PDEs certain case distinctions are necessary
in general, resulting in a finite family of formally integrable PDE
systems (e.g., a differential Thomas decomposition) such that the
union of their solution sets equals the solution set of the original
PDE system. Differential Thomas decomposition has various applications,
allowing, e.g., to decide membership to the radical differential ideal
which is generated by the given PDE system or to solve certain
differential elimination problems \citep{Robertz}.

In order to identify the most significant variable in each non-constant
(differential) polynomial, a total ordering $\succ$ on the set of (partial)
derivatives of all orders of all unknown functions $\varphi^1$, \ldots,
$\varphi^m$ is fixed beforehand. It is assumed to respect
differentiation and that differentiation increases the significance
of terms. The ordering $\succ$ is referred to as \emph{ranking}.

\medskip

We distinguish two stages of the computation of a differential
Thomas decomposition, which may be intertwined in practice: its
\emph{algebraic part} and its \emph{differential part}. The
\emph{algebraic part} of Thomas' algorithm eliminates variables
(representing unknown functions and their derivatives) in equations
and inequations by applying polynomial division and ensures square-freeness
of all left hand sides. For these reductions
the corresponding left hand sides are considered as univariate
polynomials in their most significant variables, which are commonly
referred to as their \emph{leaders}, with coefficients that are polynomials
in lower ranked variables.

More precisely, if two distinct equations $p_i = 0$ and $p_j = 0$
have the same leader $v$ and $\deg_v(p_i) \ge \deg_v(p_j)$, then there
exist (differential) polynomials $c_1$, $c_2$, and $r$ such that
\begin{equation}\label{eq:pseudodiv}
c_1 \, p_i - c_2 \, p_j = r
\end{equation}
and $r$ is zero or its leader is ranked lower than $v$ or its
leader is $v$ and $\deg_v(r) < \deg_v(p_j)$. This pseudo-division
eliminates the highest power of $v$ in $p_i$, where $c_1$ may be chosen
as a suitable power of the \emph{initial} $\init(p_j)$ of $p_j$, i.e.,
the coefficient of the highest power of $v$ in $p_j$. Replacing the
equation $p_i = 0$ with the equation $r = 0$ in the system requires
that $c_1$ does not vanish on the solution set of the system, if the
solution set of the system is to be maintained by this operation. For this
reason, Thomas' algorithm possibly splits the system into two systems which
are subsequently treated in the same way as the original system, which is
discarded. These two systems are defined by inserting the inequation
$\init(p_j) \neq 0$ and the equation $\init(p_j) = 0$, respectively.

For each two distinct inequations $q_i \neq 0$ and $q_j \neq 0$ with
the same leader the least common multiple (lcm) $r$ of $q_i$ and $q_j$ is
computed and the two inequations are replaced with $r \neq 0$. This
process involves pseudo-divisions as above and also distinguishes
cases of vanishing or non-vanishing initials.

For each pair $p_i = 0$, $q_j \neq 0$ of an equation and an inequation
with the same leader the greatest common divisor $r$ of $p_i$ and $q_j$
is computed, where polynomials are considered again as univariate
polynomials in their leaders and it is required that initials of
pseudo-divisors do not vanish on the solution set of the system.
If $p_i$ divides $q_j$, then the system is inconsistent and is discarded.
If $r \in K \setminus \{ 0 \}$, then the inequation $q_j \neq 0$ is
removed from the system. Otherwise, the equation $p_i = 0$ is replaced
with the equation $p_i / r = 0$.

Finally, further reductions and splittings of systems may be necessary
to ensure square-freeness of all left hand sides (as univariate polynomials),
i.e., non-vanishing of their \emph{discriminants}. Since $K$
is of characteristic zero, square-freeness of a univariate polynomial
is characterized by the condition that the polynomial and its derivative
have no root in common. Adhering to the recursive representation of
polynomials, a computation of the greatest common divisor of a left
hand side and its derivative with case distinctions as above allows to
determine the square-free part.

\medskip

The above procedure is a variant of Euclid's algorithm for
multivariate polynomials, terminating after finitely many steps. If the
fact that the variables occurring in $p_i$ and $q_j$ represent unknown
functions and their derivatives is neglected, then the result is a
finite collection of algebraic systems with the following property.

\begin{de}\label{de:algsimple}
An algebraic system $S$ as in (\ref{eq:originalPDE}) is said to be
\emph{simple} (with respect to $\succ$), if the following three conditions
are satisfied, where $S_{\prec v}$ is the subsystem of $S$
consisting of those equations and inequations whose leader is ranked lower
than the variable $v$.
\begin{enumerate}
\item \label{algcond:a}
All $p_i$ and all $q_j$ are non-constant polynomials.
\item \label{algcond:b}
The leaders of all $p_i = 0$ and $q_j \neq 0$ are pairwise distinct.
\item \label{algcond:c}
If $v$ is the leader of $p_i = 0$ or $q_j \neq 0$, then neither the
initial nor the discriminant of that equation or inequation has a solution
(over the complex numbers) in common with the subsystem $S_{\prec v}$.
\end{enumerate}
\end{de}

\begin{rem}
A simple algebraic system $S$ can in principle be solved recursively over
the complex numbers by extending the coordinates of a solution of $S_{\prec v}$
with a root of the equation with leader $v$, if present, or with a
complex number satisfying the inequation with leader $v$, or with an
arbitrary complex number if $v$ is not the leader of any equation or
inequation in $S$.

Moreover, conditions \ref{algcond:a} and \ref{algcond:b} imply that
the left hand sides $p_1$, \ldots, $p_s$, $q_1$, \ldots, $q_h$ in a
simple algebraic system are \emph{algebraically independent over $K$},
i.e., the zero polynomial is the only polynomial with $s+h$ indeterminates
and coefficients in $K$ which vanishes under substitution of $p_1$, \ldots,
$p_s$, $q_1$, \ldots, $q_h$.
\end{rem}

The \emph{differential part} of Thomas' algorithm works towards
formal integrability. For each two distinct equations $p_i = 0$ and $p_j = 0$
whose leaders
$\partial^k \varphi^a := \partial_t^{k_0} \partial_{x_1}^{k_1} \ldots \partial_{x_n}^{k_n} \, \varphi^a$
and
$\partial^l \varphi^a := \partial_t^{l_0} \partial_{x_1}^{l_1} \ldots \partial_{x_n}^{l_n} \, \varphi^a$
are derivatives of the same unknown function $\varphi^a$, the algorithm
considers the derivatives of $p_i$ and $p_j$ which both have
leader $\lcm(\partial^k, \partial^l) \, \varphi^a$. In an appropriate linear
combination of these two derivatives the highest power of this leader is
eliminated. Such a linear combination is referred to as a \emph{cross-derivative}
of $p_i$ and $p_j$. The algorithm checks whether pseudo-division of this
cross-derivative modulo (derivatives of) the equations of the system yields the
zero polynomial. Non-zero remainders resulting from this reduction are added as
new equations to the system, and the augmented system is treated by the
algebraic part of Thomas' algorithm again. A method which traces back to
M.~Janet \citep{Janet} allows an efficient organization of all cross-derivatives.
By distinguishing admissible and non-admissible derivations for each equation,
each cross-derivative is considered only once.

The differential part performs pseudo-divisions as in (\ref{eq:pseudodiv}),
which however involve derivatives of equations of the system in general.
If the equation $p_i = 0$ is replaced with the equation $r = 0$, then we
require that $c_1$ does not vanish on the solution set of the system. If a
proper derivative of $p_j$ is subtracted, $c_1$ may be chosen as a suitable
power of the initial of that derivative, which is the \emph{separant} of $p_j$,
namely, the partial derivative of $p_j$ with respect to its leader. Since
the discriminant of a differential polynomial (with respect to its leader)
is essentially the resultant of the polynomial and its separant, the
algebraic part of Thomas' algorithm ensures that separants do not vanish on
the solution set of the system.

The result of Thomas' algorithm is a finite collection of differential
systems with the following property.

\begin{de}\label{de:diffsimple}
A differential system $S$ as in (\ref{eq:originalPDE}) is said to be
\emph{simple} (with respect to $\succ$),
if the following three conditions are satisfied.
\begin{enumerate}
\item The system $S$ is simple as an algebraic system (with respect
to $\succ$, cf.\ Def.~\ref{de:algsimple}).
\item The cross-derivative of each pair of distinct equations whose leaders
involve the same unknown function reduces to zero modulo the equations of
the system and their derivatives.
\item The left hand side $q_j$ of every inequation is reduced
modulo the left hand sides of the equations, in the sense that
no pseudo-division of $q_j$ modulo any $p_i$ is possible.
\end{enumerate}
\end{de}

\begin{rem}
Note that each splitting of a system $S$ into two systems $S_1$ and $S_2$
performed by Thomas' algorithm is arranged so as to ensure that the
solution set of $S$ is the disjoint union of the solution sets of $S_1$
and $S_2$.
\end{rem}

\begin{de}\label{de:thomasdecomp}
Let $S$ be a differential system as in (\ref{eq:originalPDE}). A
finite collection $S_1$, \ldots, $S_k$ of simple differential systems
whose solution sets form a partition of the solution set of $S$
is called a \emph{differential Thomas decomposition} of $S$ (with
respect to $\succ$).
\end{de}

A Thomas decomposition is not uniquely determined and its computation
depends on the choice of the ranking $\succ$ and many other choices.
Among the possible improvements of the above outline of Thomas' algorithm
is the option to factorize the left hand sides of equations and to split
a system according to a non-trivial factorization.

The algorithmic details of the Thomas decomposition method have been
worked out in \citep{BGLHR} (cf.\ also \citep{Robertz}). Implementations
have been developed by T.~B{\"a}chler and M.~Lange-Hegermann
as Maple packages {\tt AlgebraicThomas} and {\tt DifferentialThomas}
and are freely available \citep{Packages}.

\medskip

We illustrate the Thomas decomposition technique on a simple example.

\begin{ex}\label{ex:notdisjoint}
Let us consider the PDE given by the following Hessian determinant:
\begin{equation}\label{eq:hessian}
\det \left( \begin{array}{ccc}
u_{x,x} & u_{x,y} & u_{x,z} \\
u_{x,y} & u_{y,y} & u_{y,z} \\
u_{x,z} & u_{y,z} & u_{z,z}
\end{array} \right) = 0\,.
\end{equation}
The Thomas decomposition method is applied over the differential polynomial ring
$K\{ u \}$ with one differential indeterminate $u$ and
commuting derivations $\partial_x$, $\partial_y$, $\partial_z$,
where $K$ is a differential field of characteristic zero, e.g., $K = \Q$ so that
the restrictions of $\partial_x$, $\partial_y$, $\partial_z$ to $K$ are all zero.
We choose the degree-reverse lexicographical ranking satisfying
$\partial_x \, u \succ \partial_y \, u \succ \partial_z \, u$, i.e.,
\[
\partial_x^{a_1} \partial_y^{a_2} \partial_z^{a_3} \, u \succ
\partial_x^{b_1} \partial_y^{b_2} \partial_z^{b_3} \, u \quad
\iff \quad
\left\{ \begin{array}{l}
a_1 + a_2 + a_3 > b_1 + b_2 + b_3 \quad \mbox{or}\\[0.5em]
( \, a_1 + a_2 + a_3 = b_1 + b_2 + b_3 \quad \mbox{and}\\[0.5em]
(a_1, a_2, a_3) \neq (b_1, b_2, b_3) \quad \mbox{and}\\[0.5em]
a_i < b_i \quad \mbox{for} \quad i = \max \, \{ j \mid a_j \neq b_j \} \, )\,.
\end{array} \right.
\]
Then the initial of the PDE (\ref{eq:hessian}) is
$u_{z,z} \, u_{y,y} - u_{y,z}^2$. We distinguish the cases whether
this initial vanishes or not. In the first case we obtain the following
simple differential system under the assumption that the initial $u_{z,z}$
of the initial of (\ref{eq:hessian}) does not vanish (no integrability
condition needs to be checked for the single equation):
\[
\mbox{\labexzerothomasfirst}\qquad
\left\{
\begin{array}{rcl}
\left( u_{z,z} \, u_{y,y} - u_{y,z}^2 \right) \underline{u_{x,x}} +
2 \, u_{y,z} \, u_{x,z} \, u_{x,y} - u_{z,z} \, u_{x,y}^2 - u_{y,y} \, u_{x,z}^2 & = & 0\,,\\[0.15em]
u_{z,z} \, \underline{u_{y,y}} - u_{y,z}^2 & \neq & 0\,,\\[0.15em]
u_{z,z} & \neq & 0\,.
\end{array}
\right.
\]
Leaders of differential polynomials are underlined where not obvious.
If $u_{z,z}$ vanishes in the first case, a simplification yields the second
simple differential system
\[
\mbox{\labexzerothomassecond}\qquad
\left\{
\begin{array}{rcl}
-u_{y,z}^2 \, \underline{u_{x,x}} + 2 \, u_{y,z} \, u_{x,z} \, u_{x,y} - u_{y,y} \, u_{x,z}^2 & = & 0\,,\\[0.15em]
u_{y,z} & \neq & 0\,,\\[0.15em]
u_{z,z} & = & 0
\end{array}
\right.
\]
after pseudo-reduction of a single cross-derivative to zero modulo the system.
If the initial of (\ref{eq:hessian}) vanishes, we again distinguish the
cases whether $u_{z,z}$ vanishes or not. If not, the pseudo-division
\[
u_{z,z} \left( 2 \, u_{y,z} \, u_{x,z} \, u_{y,z} - u_{z,z} \, u_{x,y}^2 -
u_{y,y} \, u_{x,z}^2 \right) + u_{x,z}^2 \left( u_{z,z} \, u_{y,y} - u_{y,z}^2 \right) =
\left( u_{z,z} \, u_{x,y} - u_{y,z} u_{x,z} \right)^2
\]
allows to replace the PDE (\ref{eq:hessian}) with the expression in the
bracket on the right hand side (ensuring square-freeness).
We obtain the simple differential system
\[
\mbox{\labexzerothomasthird}\qquad
\left\{
\begin{array}{rcl}
u_{z,z} \, \underline{u_{x,y}} - u_{y,z} \, u_{x,z} & = & 0\,,\\[0.15em]
u_{z,z} \, \underline{u_{y,y}} - u_{y,z}^2 & = & 0\,,\\[0.15em]
u_{z,z} & \neq & 0
\end{array}
\right.
\]
after confirming that the integrability condition for the two equations
is satisfied by applying pseudo-reduction modulo the system.
Finally, if the initial of the PDE (\ref{eq:hessian}) vanishes, then
$u_{z,z} = 0$ implies $u_{y,z} = 0$, and (\ref{eq:hessian}) reduces to
$u_{y,y} \, u_{x,z}^2 = 0$. Distinguishing the cases whether $u_{y,y}$
vanishes or not, we obtain the simple differential systems
\[
\mbox{\labexzerothomasfourth}\qquad
\left\{
\begin{array}{rcl}
u_{x,z} & = & 0\,,\\[0.15em]
u_{y,z} & = & 0\,,\\[0.15em]
u_{z,z} & = & 0\,,\\[0.15em]
u_{y,y} & \neq & 0
\end{array}
\right.
\]
and
\[
\mbox{\labexzerothomasfifth}\qquad
\left\{
\begin{array}{rcl}
u_{y,y} & = & 0\,,\\[0.15em]
u_{y,z} & = & 0\,,\\[0.15em]
u_{z,z} & = & 0\,.
\end{array}
\right.
\]
Thus, we have the following tree of case distinctions (suppressing branches leading to inconsistent systems):
\begin{center}
\begin{tikzpicture}[node distance=1.75cm]
\node (PDE) {(\ref{eq:hessian})};
\node (TX) [below of=PDE] {};
\node (TA) [left of=TX] {};
\node (TC) [left of=TA] {};
\node (TD) [below of=TC] {};
\node (T1) [left of=TD] {\labexzerothomasfirst};
\node (T2) [right of=TD] {\labexzerothomassecond};
\node (TB) [right of=TX] {};
\node (TU) [right of=TB] {};
\node (TV) [below of=TU] {};
\node (T3) [left of=TV] {\labexzerothomasthird};
\node (TW) [right of=TV] {};
\node (TY) [below of=TW] {};
\node (T4) [left of=TY] {\labexzerothomasfourth};
\node (T5) [right of=TY] {\labexzerothomasfifth};
\draw (PDE)  --  (TC) node [near start,left]
{\small $u_{z,z} \, \underline{u_{y,y}} - u_{y,z}^2 \neq 0 \phantom{xx}$};
\draw (PDE)  --  (TU) node [near start,right]
{\small $\phantom{xx} u_{z,z} \, \underline{u_{y,y}} - u_{y,z}^2 = 0$};
\draw (TU)  --  (T3) node [near start,left]
{\small $u_{z,z} \neq 0 \phantom{x}$};
\draw (TU)  --  (TW) node [near start,right]
{\small $\phantom{x} u_{z,z} = 0$};
\draw (TC)  --  (T1) node [near start,left]
{\small $u_{z,z} \neq 0 \phantom{x}$};
\draw (TC)  --  (T2) node [near start,right]
{\small $\phantom{x} u_{z,z} = 0$};
\draw (TW)  --  (T4) node [near start,left]
{\small $u_{y,y} \neq 0 \phantom{x}$};
\draw (TW)  --  (T5) node [near start,right]
{\small $\phantom{x} u_{y,y} = 0$};
\end{tikzpicture}
\end{center}
As opposed to the Thomas decomposition \labexonethomasfirst--\labexonethomasfifth\
with pairwise disjoint solution sets, e.g., the package {\tt DifferentialAlgebra}
in Maple~17 computes, using the Rosenfeld-Gr{\"o}bner algorithm, the following characteristic decomposition for (\ref{eq:hessian}):
\[
\left\{ \begin{array}{rcl}
\left( u_{z,z} \, u_{y,y} - u_{y,z}^2 \right) \underline{u_{x,x}} +
2 \, u_{y,z} \, u_{x,z} \, u_{x,y} - u_{z,z} \, u_{x,y}^2 - u_{y,y} \, u_{x,z}^2 & = & 0\,,\\[0.15em]
u_{z,z} \, \underline{u_{y,y}} - u_{y,z}^2 & \neq & 0
\end{array} \right.
\]
\[
\left\{ \begin{array}{rcl}
u_{z,z} \, \underline{u_{x,y}} - u_{y,z} \, u_{x,z} & = & 0\,,\\[0.15em]
u_{z,z} \, \underline{u_{y,y}} - u_{y,z}^2 & = & 0\,,\\[0.15em]
u_{z,z} & \neq & 0
\end{array} \right.
\qquad\qquad \left( \, = \mbox{\labexzerothomasthird} \, \right)
\]
\[
\left\{ \begin{array}{rcl}
u_{x,z} & = & 0\,,\\[0.15em]
u_{y,z} & = & 0\,,\\[0.15em]
u_{z,z} & = & 0
\end{array} \right.
\]
\[
\left\{ \begin{array}{rcl}
u_{y,y} & = & 0\,,\\[0.15em]
u_{y,z} & = & 0\,,\\[0.15em]
u_{z,z} & = & 0\,.
\end{array} \right.
\qquad\qquad \left( \, = \mbox{\labexzerothomasfifth} \, \right)
\]
The solution sets of these differential systems are not disjoint.
The third and fourth systems have solutions
$f_1(x) + f_2(x) \, y + c \, z$ in common, where
$f_1$ and $f_2$ are arbitrary analytic functions
and $c$ is an arbitrary constant.
\end{ex}

\begin{rem}
Due to its disjointness the Thomas decomposition partitions the solution space of the Euler-Lagrange equations (\ref{field}) or (\ref{mechanics}). In doing so, it may occur that the field-theoretic or mechanical model, which is globally non-singular, is singular in some domains of the solution space. In those domains there are hidden Lagrangian constraints that are revealed by the Thomas decomposition (cf.\ Section~\ref{sec:examples} for examples). In terms of these \emph{local} constraints one can pose an initial value (Cauchy) problem providing existence and uniqueness of local analytic solutions \citep{Seiler,Finikov,Gerdt10}. The local analyticity provides a smooth dependence of the solution on the initial data and thereby makes the initial-value problem well-posed~(cf.~\citep{Hadamard}). It should be noted that the well-posedness of the initial value problem has been proven for an involutive simple differential system~\citep{Finikov} related to the domain under consideration.
For another type of characteristic decomposition which is not disjoint a proof of the
well-posedness for Cauchy problem is not known in the literature.
\end{rem}

\begin{rem}
In some applications one might be interested only in one branch of the tree
of case distinctions. For instance, if the differential ideal which is generated
by the left hand sides of the given system of PDEs is prime, the resulting
Thomas decomposition contains a simple system which is most generic in a precise
sense (cf.\ \cite[Subsection~2.2.3]{Robertz}). Clearly, if only the generic simple
system is required, the computation can be restricted to that branch.
\end{rem}

\section{Adaptation of Thomas decomposition to constrained dynamics}\label{sec:application}

Hereafter, we shall write the underlying ring of differential polynomials in~(\ref{dif_pol_ring})
without subscript, since it will be clear from the context what ring is meant, partial or ordinary. We fix a ranking $\succ$ on $R$ which satisfies
\begin{equation}\label{eq:conditionranking}
\partial_t \, \varphi^a \succ \partial_{x_1}^{k_1}\cdots
\partial_{x_n}^{k_n} \, \varphi^b \quad
\mbox{for all} \quad k_1, \ldots, k_n \in \Z_{\ge 0}, \quad
a, b \in \{ 1, \ldots, m \}\,.
\end{equation}
The leader of any non-constant differential polynomial $p$ in $R$,
i.e., the differential indeterminate or
derivative $\partial_t^j \partial_{x_1}^{k_1} \cdots \partial_{x_n}^{k_n} \, \varphi^a$ which is maximal with respect to $\succ$ among those occurring in $p$,
is denoted by $\ld(p)$.

\begin{rem}\label{rem:elimranking}
Condition (\ref{eq:conditionranking}) implies that
for all $i,j$, $k_1,\ldots,k_n,l_1,\ldots,l_n\in \Z_{\ge 0}$ and for all
$a, b \in \{ 1, \ldots, m \}$ we have
\[
j > k \quad \Rightarrow \quad
\partial_t^j \partial_{x_1}^{k_1} \cdots \partial_{x_n}^{k_n} \,
\varphi^a \succ \partial_t^k \partial_{x_1}^{l_1} \cdots \partial_{x_n}^{l_n} \, \varphi^b.
\]
The claim follows from applying $\partial_t^k$ to the left hand side
and the right hand side of
\[
\partial_t^{j-k} \partial_{x_1}^{k_1} \cdots \partial_{x_n}^{k_n} \,
\varphi^a \succ \partial_t^{j-k} \, \varphi^a \succ
\partial_{x_1}^{l_1} \cdots \partial_{x_n}^{l_n} \, \varphi^b.
\]
\end{rem}

\begin{de}
Let $\psi \in \{ \varphi^1, \ldots, \varphi^m \}$ be minimal with respect to $\succ$.
A non-constant differential polynomial $p \in R$ is said to be
a \emph{$\partial_t^2 \psi$-condition} if
\[
\ld(p) \prec \partial_t^2 \psi
\]
holds.
\end{de}

\begin{de}\label{de:local}
Given Euler-Lagrange equations (\ref{field}) or (\ref{mechanics}),
their differential consequence $p=0$ is called \emph{generalized Lagrangian constraint}
if $p$ is a $\partial_t^2 \psi$-condition. If a constraint holds for all solutions of (\ref{field}) or (\ref{mechanics}) it is called \emph{global}, and \emph{local} otherwise. A non-singular Lagrangian system with local constraints is \emph{locally singular}.
\end{de}

\begin{rem}
For singular models of mechanics, generalized Lagrangian constraints are just Lagrangian ones. For singular (or locally singular) field-theoretical models, Lagrangian constraints, as we underlined in Section~\ref{sec:linearalgebra}, being of the first differential order, form a subset of the set of generalized  Lagrangian (or local Lagrangian) constraints.
\end{rem}

We consider the subring
\[
\begin{array}{rcl}
R_{\ord(t) < 2} & := & K[\partial_t^i \partial_{x_1}^{j_1}
\cdots \partial_{x_n}^{j_n} \varphi^k \mid i \in \{ 0, 1 \},
j_1, \ldots, j_n \in \Z_{\ge 0}, k \in \{ 1, \ldots, m \}] \\[0.5em]
& = & K[\varphi^k, \partial_{x_1} \varphi^k, \partial_{x_2} \varphi^k, \ldots,
\partial_t \varphi^k, \partial_t \partial_{x_1} \varphi^k, \ldots \mid
k \in \{ 1, \ldots, m \}]
\end{array}
\]
of $R$ as a differential ring with derivations
$\partial_{x_1}$, \ldots, $\partial_{x_n}$ and
the ranking induced by $\succ$.

\begin{rem}
Every $\partial_t^2 \psi$-condition is an element of $R_{\ord(t) < 2}$.
\end{rem}

\begin{rem}
Let $I$ be a differential ideal of $R$, i.e., an ideal of $R$ which is
closed under all derivations of $R$.
The set of $\partial_t^2 \psi$-conditions in $I$ is closed under
taking $R_{\ord(t) < 2}$-linear combinations and applying an arbitrary number of
the derivations $\partial_{x_1}, \ldots,\partial_{x_n}$
(cf.\ Remark~\ref{rem:elimranking}). In other words, the set of
$\partial_t^2 \psi$-conditions in $I$ is a differential ideal
of $R_{\ord(t) < 2}$.
\end{rem}

We denote by $S^{=}$ the set of left hand sides of equations in a
differential system $S$. The \emph{saturation} of a differential ideal $E$ of $R$
by a differential polynomial $q \in R$ is defined as the differential ideal
$E : q^{\infty} := \{ \, p \in R \mid q^k \, p \in E \mbox{ for some }
k \in \Z_{\ge 0} \, \}$.

\begin{prop}\label{prop:cond}
Let $S$ be a simple differential system over $R$ with respect to a
ranking satisfying (\ref{eq:conditionranking}) and let
$\psi \in \{ \varphi^1, \ldots, \varphi^m \}$ be minimal with respect to $\succ$.
Let $E$ be the differential ideal of $R$ which is generated by $S^=$
and let $q$ be the product of the initials and separants of all elements of $S^=$.
Moreover, let
\[
S_{\ord(t) < 2} := \{ \, p \in S^= \mid \ld(p) \prec \partial_t^2 \psi \, \}\,,
\]
$E_{\ord(t) < 2}$ the differential ideal of $R_{\ord(t) < 2}$ which is
generated by $S_{\ord(t) < 2}$, and $q_{\ord(t) < 2}$ the product of the
initials and separants of all elements of $S_{\ord(t) < 2}$.
Then the set of $\partial_t^2 \psi$-conditions in $I := E : q^{\infty}$ is
the radical differential ideal $E_{\ord(t) < 2} : (q_{\ord(t) < 2})^{\infty}$
of $R_{\ord(t) < 2}$.
\end{prop}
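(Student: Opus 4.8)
The plan is to read (\ref{eq:conditionranking}) as an elimination ranking that separates the derivatives of $t$-order $\ge 2$ from those of $t$-order $\le 1$, and then to compare Janet--Thomas normal forms modulo $S$ and modulo its low-order part. First I would record the shape of the objects. By Remark~\ref{rem:elimranking} and the minimality of $\psi$, every derivative of $t$-order $\ge 2$ is $\succ \partial_t^2\psi$ while every derivative of $t$-order $\le 1$ is $\prec\partial_t^2\psi$, so $\partial_t^2\psi$ is the $\succ$-smallest derivative of $t$-order at least two. Hence a non-constant $p$ is a $\partial_t^2\psi$-condition exactly when all of its variables have $t$-order $\le 1$, i.e.\ when $p\in R_{\ord(t)<2}$; in particular each equation of $S_{\ord(t)<2}$ lies wholly in $R_{\ord(t)<2}$, and the set of $\partial_t^2\psi$-conditions in $I$ is the non-constant part of $I\cap R_{\ord(t)<2}$, a $\partial_{x_i}$-differential ideal of $R_{\ord(t)<2}$ by the preceding remark.

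Throughout I would rely on the structure theorem for simple systems \citep{BGLHR,Robertz}: for simple $S$ the saturation $I=E:q^\infty$ with $E=[S^=]$ is a radical differential ideal, and $p\in I$ holds if and only if $p$ reduces to $0$ modulo $S$, the reduction using the equations of $S$ together with their derivatives and pseudo-dividing only by the initials and separants collected in $q$. The inclusion $\supseteq$ is then immediate: $S_{\ord(t)<2}\subseteq S^=$ gives $E_{\ord(t)<2}\subseteq E$, and $q_{\ord(t)<2}$ is a factor of $q$, so $(q_{\ord(t)<2})^k\,p\in E_{\ord(t)<2}$ forces $q^k\,p\in E$; being an element of $R_{\ord(t)<2}$, such a $p$ is a $\partial_t^2\psi$-condition in $I$.

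For the reverse inclusion I take a $\partial_t^2\psi$-condition $p\in I$, so $p\in R_{\ord(t)<2}$ reduces to $0$ modulo $S$, and I follow that reduction. Since every variable of $p$ has $t$-order $\le 1$, only leaders of $t$-order $\le 1$—precisely the leaders of $S_{\ord(t)<2}$—can match a term of $p$, and each remainder again lies in $R_{\ord(t)<2}$; the computation therefore stays inside $R_{\ord(t)<2}$ and uses only (derivatives of) equations of $S_{\ord(t)<2}$. The main obstacle is that these reductions are carried out in $R$ and may invoke $\partial_t$: matching a $t$-order-$1$ term of $p$ against a $t$-order-$0$ leader $\ld(e)$ forces the prolongation $\partial_t e$, which is \emph{not} an element of the $\partial_{x_i}$-differential ideal $E_{\ord(t)<2}$. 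Everything hinges on the lemma that each such $\partial_t e$ nevertheless lies in $E_{\ord(t)<2}:(q_{\ord(t)<2})^\infty$.

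I would prove this lemma together with the claim that $S_{\ord(t)<2}$ is a simple differential system over $R_{\ord(t)<2}$, both from the involutivity of $S$. The conditions of Definition~\ref{de:diffsimple} restrict to the low block: distinctness of leaders and non-vanishing of initials and discriminants on $S_{\prec v}\subseteq S_{\ord(t)<2}$ are inherited, and a cross-derivative of two low equations whose leaders have equal $t$-order uses only $\partial_{x_i}$ and so reduces to $0$ modulo $S_{\ord(t)<2}$. For a $t$-order-$0$ leader $\ld(e)$ the prolongation $\partial_t e$ has the $t$-order-$1$ leader $\partial_t\,\ld(e)$, still in $R_{\ord(t)<2}$, and the aim is to show that the integrability conditions of $S$, specialised under the elimination ranking (\ref{eq:conditionranking}), express $\partial_t e$ through lower-ranked $\partial_{x_i}$-prolongations of $S_{\ord(t)<2}$ modulo $q_{\ord(t)<2}$. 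Granting the lemma, every $\partial_t$-prolongation appearing in the reduction of $p$ is replaced by its $S_{\ord(t)<2}$-reduction, so that the reduction of $p$ to $0$ becomes a reduction modulo $S_{\ord(t)<2}$ and certifies $(q_{\ord(t)<2})^N p\in E_{\ord(t)<2}$, i.e.\ $p\in E_{\ord(t)<2}:(q_{\ord(t)<2})^\infty$. Verifying that the involutivity of $S$ really supplies these low-order integrability conditions is the crux of the whole argument; the two inclusions then combine to the asserted equality, with the right-hand side radical because $S_{\ord(t)<2}$ is simple over $R_{\ord(t)<2}$.
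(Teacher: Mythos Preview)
Your worry about $\partial_t$-prolongations is legitimate, but the lemma you propose to resolve it---that $\partial_t e\in E_{\ord(t)<2}:(q_{\ord(t)<2})^\infty$ for every $e\in S_{\ord(t)<2}$ with $t$-order-$0$ leader---is false. Take the simple system $(T_1)$ of Example~\ref{ex:ex5}: there $S_{\ord(t)<2}=\{q_1-q_2\}$, and since this is an ODE setting, $R_{\ord(t)<2}=K[q_1,q_2,(q_1)_t,(q_2)_t]$ carries \emph{no} derivations at all; hence $E_{\ord(t)<2}$ is the ordinary ideal $(q_1-q_2)$ and $q_{\ord(t)<2}=1$. The element $(q_1)_t-(q_2)_t=\partial_t(q_1-q_2)$ is a $\partial_t^2\psi$-condition in $I$ (it lies in $E\subseteq I$ and has leader $(q_1)_t\prec(q_2)_{t,t}$), yet it is plainly not in $(q_1-q_2)$. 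There is no integrability condition to invoke: the leaders $(q_2)_{t,t}$ and $q_1$ of the two equations of $(T_1)$ involve different unknowns, so no cross-derivative is formed, and involutivity of $S$ yields nothing that could express $\partial_t(q_1-q_2)$ through $\partial_{x_i}$-prolongations of $S_{\ord(t)<2}$. The same obstruction appears whenever $S_{\ord(t)<2}$ contains an equation with a $t$-order-$0$ leader that is not itself dominated by another low-order equation.

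The paper's own proof is a three-line argument that simply does not raise this point: it notes that differential pseudo-reduction of $p$ uses only pseudo-divisors $e\in S_{\ord(t)<2}$, with initials, separants and cofactors all in $R_{\ord(t)<2}$, and passes directly to the conclusion. Your analysis has isolated precisely the step that this brevity glosses over---namely that the derivation monomials $\theta$ applied to those $e$ may contain $\partial_t$, so that $\theta e$ need not lie in the $\partial_{x_i}$-differential ideal $E_{\ord(t)<2}$. You are right that this needs attention; but the route you sketch (deriving $\partial_t e\in E_{\ord(t)<2}:(q_{\ord(t)<2})^\infty$ from the involutivity of $S$) cannot succeed, as the counterexample shows. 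Either the proof must avoid $\partial_t$-prolongations altogether, or the statement of the proposition needs to be read with $E_{\ord(t)<2}$ enlarged to include them.
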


\begin{proof}
Let $p$ be a $\partial_t^2 \psi$-condition in $I$. Since $S$ is a simple
differential system, repeated differential pseudo-reduction of $p$ modulo $S^=$
yields zero (cf. \citep[Prop.~2.2.50]{Robertz}).
Since we have $\ld(p) \prec \partial_t^2 \psi$, only elements of
$S_{\ord(t) < 2}$ are chosen as pseudo-divisors. Their initials and separants
are elements of $R_{\ord(t) < 2}$ and the coefficients which are used for
the pseudo-reductions are in $R_{\ord(t) < 2}$ as well. This shows that the
set of $\partial_t^2 \psi$-conditions in $I$ is contained
in $E_{\ord(t) < 2} : (q_{\ord(t) < 2})^{\infty}$. Conversely, every element of
$E_{\ord(t) < 2} : (q_{\ord(t) < 2})^{\infty}$ is clearly a
$\partial_t^2 \psi$-condition in $I$.
\end{proof}

We recall that the radical of a differential ideal $I$ of $R$ is defined as
the differential ideal
\[
\sqrt{I} := \{ \, p \in R \mid p^k \in I \mbox{ for some } k \in \N \, \}\,.
\]

\begin{thm}\label{thm:dtcond}
Let
\begin{equation}\label{eq:PDEsystem}
p_1 = 0, \quad \ldots, \quad p_s = 0, \quad
q_1 \neq 0, \quad \ldots, \quad q_h \neq 0
\end{equation}
be a (not necessarily simple) system of polynomial partial differential equations
and inequations, where $p_1$, \ldots, $p_s$, $q_1$, \ldots, $q_h \in R$,
$s$, $h \in \Z_{\ge 0}$.
We denote by $E$ the differential ideal of $R$ which is generated
by $p_1$, \ldots, $p_s$
and by $T$ the set of $\partial_t^2 \psi$-conditions in
$\sqrt{E : q^{\infty}}$, where $q := q_1 \cdot \ldots \cdot q_h$.
Let
\[
S^{(1)}, \quad \ldots, \quad S^{(r)}
\]
be a differential Thomas decomposition of (\ref{eq:PDEsystem})
with respect to a ranking satisfying (\ref{eq:conditionranking}) and let
$\psi \in \{ \varphi^1, \ldots, \varphi^m \}$ be minimal with respect to $\succ$.
For $i = 1$, \ldots, $r$, let
\[
T^{(i)} := E^{(i)}_{\ord(t) < 2} : (q^{(i)}_{\ord(t) < 2})^{\infty},
\]
where $E^{(i)}_{\ord(t) < 2}$ is the differential ideal of $R_{\ord(t) < 2}$
which is generated by
\[
S^{(i)}_{\ord(t) < 2} := \{ \, p \in (S^{(i)})^= \mid \ld(p) \prec \partial_t^2 \psi \, \}
\]
and $q^{(i)}_{\ord(t) < 2}$ is the product of the initials and separants of all
elements of $S^{(i)}_{\ord(t) < 2}$. Then we have
\[
T = T^{(1)} \cap \ldots \cap T^{(r)}.
\]
\end{thm}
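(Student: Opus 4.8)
The plan is to reduce the statement to two ingredients: Proposition~\ref{prop:cond}, applied separately to each simple system of the decomposition, together with the characteristic property of a differential Thomas decomposition that the radical differential ideal of the original system is the intersection of the saturation ideals of the simple systems. Throughout the proof I would, for a differential ideal $J$ of $R$, write $\mathcal{C}(J)$ for the set of $\partial_t^2 \psi$-conditions contained in $J$; since this is a differential ideal of $R_{\ord(t) < 2}$ (cf.\ the Remark stating that the set of $\partial_t^2 \psi$-conditions in a differential ideal forms a differential ideal of $R_{\ord(t) < 2}$), the definition of $T$ reads $T = \mathcal{C}(\sqrt{E : q^{\infty}})$.

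First I would apply Proposition~\ref{prop:cond} to each $S^{(i)}$. As the Thomas decomposition is computed with respect to a ranking satisfying (\ref{eq:conditionranking}) and each $S^{(i)}$ is a simple differential system, the proposition yields that $T^{(i)} = E^{(i)}_{\ord(t) < 2} : (q^{(i)}_{\ord(t) < 2})^{\infty}$ is exactly the set of $\partial_t^2 \psi$-conditions in $I^{(i)} := E^{(i)} : (q^{(i)})^{\infty}$, where $E^{(i)}$ is the differential ideal generated by $(S^{(i)})^{=}$ and $q^{(i)}$ is the product of the initials and separants of its elements. In the notation above this says $T^{(i)} = \mathcal{C}(I^{(i)})$.

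The second, and main, ingredient is the identity
\[
\sqrt{E : q^{\infty}} = \bigcap_{i=1}^{r} I^{(i)}.
\]
I would derive this from the defining property of the decomposition (Definition~\ref{de:thomasdecomp}): the solution sets of $S^{(1)}, \ldots, S^{(r)}$ form a partition of the solution set of (\ref{eq:PDEsystem}). Passing to a differentially closed extension of $K$ and invoking a differential Nullstellensatz, the saturation ideal $I^{(i)}$ of a simple system is radical and equals the vanishing ideal of the solution set of $S^{(i)}$, while $\sqrt{E : q^{\infty}}$ is the vanishing ideal of the solution set of (\ref{eq:PDEsystem}); since the vanishing ideal of a finite union is the intersection of the vanishing ideals, the displayed identity follows (this is precisely what makes the membership test to $\sqrt{E : q^{\infty}}$ work by reduction modulo the individual simple systems, cf.~\citep{BGLHR,Robertz}). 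I expect the careful justification of this identity, in particular the identification of the saturation ideal of a simple system with a radical vanishing ideal, to be the main obstacle, the rest being formal.

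Finally, it remains to note that taking $\partial_t^2 \psi$-conditions commutes with intersection: a differential polynomial $p$ lies in $\mathcal{C}(\bigcap_i I^{(i)})$ if and only if $p$ is a $\partial_t^2 \psi$-condition lying in every $I^{(i)}$, i.e.\ $p \in \mathcal{C}(I^{(i)})$ for all $i$, whence $\mathcal{C}(\bigcap_i I^{(i)}) = \bigcap_i \mathcal{C}(I^{(i)})$. Combining the three steps then gives
\[
T = \mathcal{C}\big(\sqrt{E : q^{\infty}}\big) = \mathcal{C}\Big( \bigcap_{i=1}^{r} I^{(i)} \Big) = \bigcap_{i=1}^{r} \mathcal{C}(I^{(i)}) = \bigcap_{i=1}^{r} T^{(i)},
\]
which is the asserted equality.
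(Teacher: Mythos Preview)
Your proposal is correct and follows essentially the same route as the paper: the paper invokes the identity $\sqrt{E : q^{\infty}} = \bigcap_i (E^{(i)} : (q^{(i)})^{\infty})$ by citing \citep[Prop.~2.2.72]{Robertz} rather than rederiving it via a Nullstellensatz argument, then applies Proposition~\ref{prop:cond} to each simple system, leaving the compatibility of ``taking $\partial_t^2\psi$-conditions'' with intersection implicit. Your write-up is a bit more explicit on that last point, but the structure and ingredients are the same.
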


\begin{proof}
We have
\[
\sqrt{E : q^{\infty}} = (E^{(1)} : (q^{(1)})^{\infty}) \cap \ldots \cap
(E^{(r)} : (q^{(r)})^{\infty}),
\]
where $E^{(i)}$ is the differential ideal of $R$ which is generated by
$(S^{(i)})^=$ and $q^{(i)}$ is the product of the initials and separants
of all elements of $(S^{(i)})^=$, where $i \in \{ 1, \ldots, r \}$
(cf.\ \citep[Prop.~2.2.72]{Robertz}). From Proposition~\ref{prop:cond}
we conclude that,
for each $i \in \{ 1, \ldots, r \}$,
the set of $\partial_t^2 \psi$-conditions in
$E^{(i)} : (q^{(i)})^{\infty}$ is equal to $T^{(i)}$.
\end{proof}

In the examples of the next section it is a very simple task to trace the
origin of the $\partial_t^2 \psi$-conditions arising in the
differential Thomas decomposition.

\section{Examples}\label{sec:examples}

In this section we apply the Thomas decomposition to four models. Two of these models are field-theoretic and the other two are dynamical systems.

\begin{ex}\label{ex:sch}
The first model is the (1+1)-dimensional version of the chiral Schwinger model~\citep{Schwinger,Jackiw}. In the framework of Hamiltonian formalism the last model was studied, for example, in~\citep{Rothe,DasGhosh}.  We consider the Lagrangian density  in the form  (\,\citep{DasGhosh}, Equation~19\,)
\begin{eqnarray*}
& \LL=& \frac{1}{2}\,(\partial_t A_0-\partial_x A_1)^2+\frac{1}{2}\,(\partial_t\phi)^2 -\frac{1}{2}\,(\partial_x\phi)^2 + e\,(\partial_t\phi)\,A_0+e\,\phi(\partial_t\,A_1) \\
&& +\ e\,(\partial_t\phi)\,(A_0-A_1) +\frac{1}{2}\,a\,e^2\,(A_0^2-A_1^2)\,.\\
\end{eqnarray*}
Here, $e$, $a$ are parameters, $t$, $x$ are the independent variables
and $\varphi^1=A_0$, $\varphi^2=A_1$, $\varphi^3=\phi$ are the dependent variables.

Let $\succ$ be the ranking
satisfying (\ref{eq:conditionranking}) and such that
\begin{eqnarray*}
&& \phi \prec A_1 \prec A_0 \prec \partial_x\phi \prec \partial_x A_1 \prec \partial_x A_0 \prec \partial_x^2\phi \prec \ldots \\
&& \prec \partial_t\phi \prec \partial_t A_1 \prec \partial_t A_0 \prec \partial_t\partial_x \phi  \prec \partial_t\partial_x A_1 \prec \partial_t\partial_x A_0
\prec \partial_t\partial^2_x \phi \prec \ldots
\end{eqnarray*}
The Euler-Lagrange equations read
\begin{equation}\label{eq:euler-lagrange}
\left\{ \begin{array}{rcl}
\underline{\partial_t^2A_0}-\partial_t\partial_x A_1 - e\,(\partial_t\phi + \partial_x\phi) - a\,e^2\,A_0 &=& 0\,,\\[0.5em]
\underline{\partial_t\partial_x A_0} -e\,(\partial_t\phi+\partial_x\phi)-\partial_x^2A_1-a\,e^2\,A_1 & = & 0\,, \\[0.5em]
\underline{\partial^2_t\phi}+e\,(\partial_tA_0-\partial_tA_1)-\partial^2_x\phi+e\,(\partial_x A_0
- \partial_x A_1)& = & 0\,.
\end{array} \right.
\end{equation}

The Hessian (\ref{hessian}) of this field-theoretic model is $H=\diag(1,0,1)$ and, hence, the model is singular. The Euler-Lagrange equations (\ref{eq:euler-lagrange}) are linear.
In this case Thomas' algorithm applied to the system performs its completion to involution and outputs the Janet involutive form of (\ref{eq:euler-lagrange}) (cf.~\citep{BGLHR,Robertz}). Denote the first and second equation in (\ref{eq:euler-lagrange}) by $P$ and $Q$, respectively. Then the first step of the completion procedure consists in detection of the (underlined) leaders, calculation of the cross-derivative
\[
\partial_x P-\partial_t Q = 0\,,
\]
and then elimination (reduction) of terms in the left hand side modulo equations in (\ref{eq:euler-lagrange}), as explained in Section~\ref{sec:thomasdecomposition}. As a result of the completion, Thomas' algorithm yields
\begin{equation}\label{eq:involutive}
\left\{ \begin{array}{rcl}
\mbox{\boldmath $(1-a)\,\partial_tA_0+(1+a)\,\partial_x A_0-\partial_t A_1-\partial_x A_1$} &\mbox{\boldmath $=$}&\mbox{\boldmath $0$}\,,\\[0.5em]
(1+a)\,(\partial_t^2 A_1-\partial^2_x A_1)- e\,(2+a)\,(\partial_t\phi+\partial_x\phi) - a\,e^2\,(A_0+A_1)-a^2e^2\,A_1 & = & 0\,, \\[0.5em]
(a+1)(\partial_t\partial_xA_1-\partial^2_xA_0) - e\,(\partial_t\phi+\partial_x\phi)-a\,e^2\,A_1 & = & 0\,,\\[0.5em]
\partial_t^2\phi-\partial^2_x\phi-e\,a\,(\partial_xA_0-\partial_tA_1) & = & 0\,.
\end{array} \right.
\end{equation}
The first equation in (\ref{eq:involutive}) (marked in boldface) is a Lagrangian constraint.
\end{ex}

\medskip
\begin{ex}\label{ex:ex1}
To show a field-theoretic situation when the differential Thomas decomposition computes local (generalized) Lagrangian constraints (cf.\ Definition~\ref{de:local}) for a (globally) non-singular (but locally singular) model, consider the following Lagrangian density:
\begin{equation}\label{ft-model}
\LL =
\frac{1}{2} \, u_t^2 + u \, v_t^2 \, w_x + w_t \, (u_t + v_x)\,.
\end{equation}
The independent variables are $t$, $x$ and the dependent
variables are $\varphi^1 = u$, $\varphi^2 = v$, $\varphi^3 = w$.
We choose the ranking $\succ$
satisfying (\ref{eq:conditionranking}) such that
\[
w \prec v \prec u \prec w_x \prec v_x \prec u_x \prec w_{x,x} \prec \ldots
\prec w_t \prec v_t \prec u_t \prec w_{t,x} \prec v_{t,x} \prec u_{t,x}
\prec w_{t,x,x} \prec \ldots
\]
The density (\ref{ft-model}) generates the Euler-Lagrange equations
\begin{equation}\label{eq:expde1}
\left\{ \begin{array}{rcl}
\underline{u_{t,t}} + w_{t,t} - v_t^2 \, w_x & = & 0\,,\\[0.5em]
2 \, u \, w_x \, \underline{v_{t,t}} + 2 \, u_t \, v_t \, w_x + 2 \, u \, v_t \, w_{t,x} + w_{t,x} & = & 0\,,\\[0.5em]
\underline{u_{t,t}} + v_{t,x} + 2 \, u \, v_t \, v_{t,x} + u_x \, v_t^2 & = & 0\,.
\end{array} \right.
\end{equation}
Let $A$, $B$, $C$ denote the first, second and third equation, respectively.
Considering the (underlined) leaders of these equations,
Thomas' algorithm first replaces $C$ with $C - A$. This corresponds to a row
reduction of the Hessian:
\[
H^{(1)} = \left(
\begin{array}{ccc}
1 & 0 & 1 \\
0 & 2 \, u \, w_x & 0 \\
1 & 0 & 0
\end{array}
\right)\,.
\]
The leaders of the Euler-Lagrange equations
are then $u_{t,t}$, $v_{t,t}$, $w_{t,t}$. Since these differential polynomials involve
pairwise distinct dependent variables, formal integrability of
the system requires no further check (i.e., technically speaking, all derivations
are admissible in the sense of Janet division). Only the possible vanishing of the
initial of the second equation has be taken into account (compare also with the
entry at position $(2,2)$ of the Hessian $H^{(1)}$).

The field model (\ref{eq:expde1}) is not singular. For this reason the
generic case leads to a simple system not containing any constraints
(cf.\ \labexonethomasfirst\ below).
We conclude, using Theorem~\ref{thm:dtcond},
that among the consequences of the given PDE system (\ref{eq:expde1})
there exist no (generalized) Lagrangian constraints.

\medskip

There are precisely two possibilities for the initial of the
second equation to vanish: $w_x = 0$ or $u = 0$.
All local generalized Lagrangian constraints
for these more special consequences
of the given PDE system (\ref{eq:expde1}) arise from the imposed
equations or their \emph{integrability conditions}~\citep{Seiler}.

\medskip

In the first case (i.e., $w_x = 0$), the integrability condition for the
non-zero equations in (\ref{eq:expde1})
\[
\underline{w_{x}} = 0\,, \qquad
v_{t,x} + u_x \, v_t^2 + 2 \, u \, v_t \, v_{t,x} - \underline{w_{t,t}} = 0
\]
is
\[
(2 \, u \, v_t + 1) \, \underline{v_{t,x,x}} + 2 \, u \, v_{t,x}^2 +
4 \, u_x \, v_t \, v_{t,x} + u_{x,x} \, v_t^2 = 0\,.
\]
We obtain three simple systems distinguishing the cases whether
the initial $2 \, u \, v_t + 1$ of the last equation vanishes
(cf.\ \labexonethomasfourth\ below),
or has vanishing initial $u$
(cf.\ \labexonethomasthird\ below),
or has non-vanishing initial $u$
(cf.\ \labexonethomassecond\ below).

\medskip

In the second case (i.e., $u = 0$, $w_x \neq 0$), the integrability condition for
\[
\underline{w_{t,t}} - v_t^2 \, w_x = 0\,, \qquad
\underline{w_{t,x}} = 0
\]
is
\[
2 \, v_t \, w_x \, \underline{v_{t,x}} = 0\,.
\]
We obtain two simple systems distinguishing the cases whether $v_t$ vanishes or not
(cf.\ \labexonethomasfifth\ and \labexonethomassixth\ below, respectively).

\medskip

Summarizing we obtain the following tree of case distinctions
\begin{center}
\begin{tikzpicture}[node distance=2cm]
\node (PDE) {(\ref{eq:expde1})};
\node (TX) [below of=PDE] {};
\node (TY) [right of=TX] {};
\node (TZ) [right of=TY] {};
\node (T3) [below of=TX] {\labexonethomasthird}; 
\node (TW) [left of=T3] {};
\node (T2) [left of=TW] {\labexonethomassecond}; 
\node (T4) [right of=T3] {\labexonethomasfourth}; 
\node (TV) [left of=TX] {};
\node (T1) [left of=TV] {\labexonethomasfirst}; 
\node (T5) [right of=T4] {\labexonethomasfifth}; 
\node (T6) [right of=T5] {\labexonethomassixth}; 
\draw (PDE)  --  (T1) node [near start,left]
{\small $\begin{array}{c} w_x \neq 0 \\ u \neq 0 \end{array} \phantom{xx}$};
\draw (PDE)  --  (TX) node [midway,left]
{\small $w_x = 0$};
\draw (PDE)  --  (TZ) node [near start,right]
{\small $\phantom{xx} \begin{array}{c} w_x \neq 0 \\ u = 0 \end{array}$};
\draw (TX)  --  (T2) node [near start,left]
{\small $\begin{array}{c} 2 \, u \, v_t + 1 \neq 0 \\ u \neq 0 \end{array}$};
\draw (TX) --  (T3) node [midway,left]
{\small $u = 0$};
\draw (TX) --  (T4) node [near start,right]
{\small $\!\begin{array}{c} 2 \, u \, v_t + 1 = 0 \\ u \neq 0 \end{array}$};
\draw (TZ) --  (T5) node [midway,left]
{\small $v_t = 0$};
\draw (TZ) --  (T6) node [midway,right]
{\small $\phantom{x} v_t \neq 0$};
\end{tikzpicture}
\end{center}
where \labexonethomasfirst, \ldots, \labexonethomassixth\
are the simple differential systems of the following Thomas decomposition
of (\ref{eq:expde1}) and where for every simple subsystem we mark its
local generalized Lagrangian constraints in boldface:
\[
\mbox{\labexonethomasfirst}\qquad
\left\{
\begin{array}{rcl}
u_{t,t} + (2 \, u \, v_t + 1) \, v_{t,x} + u_x \, v_{t}^{2} & = & 0\,,\\[0.2em]
2 \, u \, w_{x} \, v_{t,t} + (2 \, u \, v_{t} + 1) \, w_{t,x} + 2 \, w_x \, v_t \, u_t & = & 0\,,\\[0.2em]
(2 \, u \, v_t + 1) \, v_{t,x} - w_{t,t} + (u_x + w_x) \, v_{t}^{2} & = & 0\,,\\[0.2em]
w_{x} & \neq & 0\,,\\[0.2em]
u & \neq & 0
\end{array}
\right.
\]
\[
\mbox{\labexonethomassecond}\qquad
\left\{
\begin{array}{rcl}
u_{t,t} + (2 \, u \, v_t + 1) \, v_{t,x} + u_x \, v_{t}^2 & = & 0\,,\\[0.2em]
\mbox{\boldmath $(2 \, u \, v_t + 1) \, v_{t,x,x} + 2 \, u \, v_{t,x}^2 + 4 \, u_x \, v_t \, v_{t,x}
+ v_{t}^2 \, u_{x,x}$} & \mbox{\boldmath $=$} & \mbox{\boldmath $0$}\,,\\[0.2em]
(2 \, u \, v_t + 1) \, v_{t,x} + u_x \, v_{t}^{2} - w_{t,t} & = & 0\,,\\[0.2em]
\mbox{\boldmath $w_{x}$} & \mbox{\boldmath $=$} & \mbox{\boldmath $0$}\,,\\[0.2em]
u & \neq & 0\,,\\[0.2em]
2 \, u \, v_{t} + 1 & \neq & 0
\end{array}
\right.
\]
\[
\mbox{\labexonethomasthird}\qquad
\left\{
\begin{array}{rcl}
\mbox{\boldmath $u$} & \mbox{\boldmath $=$} & \mbox{\boldmath $0$}\,,\\[0.2em]
\mbox{\boldmath $v_{{t,x}}$} & \mbox{\boldmath $=$} & \mbox{\boldmath $0$}\,,\\[0.2em]
w_{t,t} & = & 0\,,\\[0.2em]
\mbox{\boldmath $w_{x}$} & \mbox{\boldmath $=$} & \mbox{\boldmath $0$}
\end{array}
\right.
\]
\[
\mbox{\labexonethomasfourth}\qquad
\left\{
\begin{array}{rcl}
u_{t,t} & = & 0\,,\\[0.2em]
\mbox{\boldmath $u_{x}$} & \mbox{\boldmath $=$} & \mbox{\boldmath $0$}\,,\\[0.2em]
\mbox{\boldmath $2 \, u \, v_{t}+1$} & \mbox{\boldmath $=$} & \mbox{\boldmath $0$}\,,\\[0.2em]
w_{t,t} & = & 0\,,\\[0.2em]
\mbox{\boldmath $w_{x}$} & \mbox{\boldmath $=$} & \mbox{\boldmath $0$}\,,\\[0.2em]
u & \neq & 0
\end{array}
\right.
\]
\[
\mbox{\labexonethomasfifth}\qquad
\left\{
\begin{array}{rcl}
\mbox{\boldmath $u$} & \mbox{\boldmath $=$} & \mbox{\boldmath $0$}\,,\\[0.2em]
\mbox{\boldmath $v_{t}$} & \mbox{\boldmath $=$} & \mbox{\boldmath $0$}\,,\\[0.2em]
w_{t,t} & = & 0\,,\\[0.2em]
\mbox{\boldmath $w_{t,x}$} & \mbox{\boldmath $=$} & \mbox{\boldmath $0$}\,,\\[0.2em]
w_{x} & \neq & 0
\end{array}
\right.
\]
\[
\mbox{\labexonethomassixth}\qquad
\left\{
\begin{array}{rcl}
\mbox{\boldmath $u$} & \mbox{\boldmath $=$} & \mbox{\boldmath $0$}\,,\\[0.2em]
\mbox{\boldmath $v_{{t,x}}$} & \mbox{\boldmath $=$} & \mbox{\boldmath $0$}\,,\\[0.2em]
w_{t,t} - w_x \, v_{t}^{2} & = & 0\,,\\[0.2em]
\mbox{\boldmath $w_{t,x}$} & \mbox{\boldmath $=$} & \mbox{\boldmath $0$}\,,\\[0.2em]
\mbox{\boldmath $w_{x,x}$} & \mbox{\boldmath $=$} & \mbox{\boldmath $0$}\,,\\[0.2em]
w_{x} & \neq & 0\,,\\[0.2em]
v_{t} & \neq & 0\,.
\end{array}
\right.
\]

Since the differential Thomas decomposition is disjoint (cf.\ Section~\ref{sec:thomasdecomposition}), the simple differential subsystems $(T_1),\ldots,(T_6)$ (cf.\ Definition~\ref{de:diffsimple}) partition the solution space of the Euler-Lagrange equations (\ref{eq:expde1}), and reveal local Lagrangian constraints.
\end{ex}

\medskip
\begin{ex}\label{ex:ex5} As an example of singular dynamical system we consider Lagrangian taken from (\,\citep{Deriglazov}, Equation~8.1\,)
\[
L =
q_2^2 \, (q_1)_t^2 + q_1^2 \, (q_2)_t^2 + 2 \, q_1 \, q_2 \, (q_1)_t \, (q_2)_t + q_1^2 + q_2^2\,.
\]
The independent variable is $t$ and the dependent variables
are $y^1 = q_1$, $y^2 = q_2$.
We choose the ranking $\succ$ such that
\[
q_2 \prec q_1 \prec (q_2)_t \prec (q_1)_t \prec
(q_2)_{t,t} \prec (q_1)_{t,t} \prec \ldots
\]
Then the Euler-Lagrange equations are given by
\begin{equation}\label{eq:expde5}
\left\{ \begin{array}{rcl}
4 \, q_2 \, (q_2)_t \, (q_1)_t + 2 \, q_2^2 \, \underline{(q_1)_{t,t}} + 2 \, q_1 \, q_2 \, (q_2)_{t,t} - 2 \, q_1 & = & 0\,,\\[0.5em]
4 \, q_1 \, (q_2)_t \, (q_1)_t + 2 \, q_1^2 \, (q_2)_{t,t} + 2 \, q_1 \, q_2 \, \underline{(q_1)_{t,t}} - 2 \, q_2 & = & 0\,.
\end{array} \right.
\end{equation}
Let $A$ and $B$ denote the first and second equation, respectively.
According to the (underlined) leaders of these equations, Thomas' algorithm
first performs a pseudo-reduction of $B$ modulo $A$; more precisely, $B$
is replaced with the remainder
\begin{equation}\label{eq:pseudoreductionB}
q_2 \, B - q_1 \, A = 2 \, (q_1^2 - q_2^2) = 0\,.
\end{equation}
This computation corresponds to a row reduction of the Hessian:
\[
H^{(1)} = \left(
\begin{array}{cc}
2 \, q_2^2 & 2 \, q_1 \, q_2 \\
2 \, q_1 \, q_2 & 2 \, q_1^2
\end{array}
\right)\,.
\]
We obtain a zero row
(indeed, the determinant of $H^{(1)}$ vanishes), which is
reflected by the fact that all terms involving differentiation order~$2$ in
(\ref{eq:pseudoreductionB}) cancel. Hence, the model (\ref{eq:expde5}) is singular.

The solution set of the system does not change when
$B$ is replaced with the remainder in (\ref{eq:pseudoreductionB}) if
the coefficient $q_2$ in (\ref{eq:pseudoreductionB}) does
not vanish on the solution set. For this reason, Thomas' algorithm splits
the original system (\ref{eq:expde5}) into one system incorporating the
condition $q_2 \neq 0$, where $B$ is replaced with $q_1^2 - q_2^2 = 0$,
and a complementary system containing the new condition $q_2 = 0$.
By taking the factorization $q_1^2 - q_2^2 = (q_1 - q_2) (q_1 + q_2)$
into account, the first system is split again into two complementary
systems \labexfivethomasfirst\ and \labexfivethomassecond.
For both systems, Thomas' algorithm reduces $A$ modulo $q_1 - q_2 = 0$
or $q_1 + q_2 = 0$, respectively. Both remainders are divided by the
non-vanishing factor $2 \, q_2$.
Note that for ODE systems no formal integrability check is necessary, and
the initials of the resulting left hand sides in
\labexfivethomasfirst\ and \labexfivethomassecond\
do not vanish. By reducing $A$ and $B$ modulo $q_2 = 0$
in the remaining case, we obtain the third simple differential system
\labexfivethomasthird\
of the Thomas decomposition.
The Lagrangian constraints we find are $q_1 - q_2 = 0$ and $q_1 + q_2 = 0$ in
\labexfivethomasfirst\ and \labexfivethomassecond,
respectively (in boldface below).

Thus, we have the following tree of case distinctions
\begin{center}
\begin{tikzpicture}[node distance=2cm]
\node (PDE) {(\ref{eq:expde5})};
\node (TX) [below of=PDE] {};
\node (TW) [left of=TX] {};
\node (T3) [right of=TX] {\labexfivethomasthird}; 
\node (T2) [below of=TX] {\labexfivethomassecond}; 
\node (T0) [left of=T2] {};
\node (T1) [left of=T0] {\labexfivethomasfirst}; 
\draw (PDE)  --  (TW) node [near start,left]
{\small $q_2 \neq 0 \phantom{x}$};
\draw (PDE)  --  (T3) node [near start,right]
{\small $\phantom{x} q_2 = 0$};
\draw (TW)  --  (T1) node [near start,left]
{\small $q_1 - q_2 = 0 \phantom{x}$};
\draw (TW)  --  (T2) node [near start,right]
{\small $\phantom{x} q_1 + q_2 = 0$};
\end{tikzpicture}
\end{center}
where \labexfivethomasfirst, \labexfivethomassecond, \labexfivethomasthird\
are the simple differential systems of the following Thomas decomposition
of (\ref{eq:expde5}):
\[
\mbox{\labexfivethomasfirst}\qquad
\left\{
\begin{array}{rcl}
2 \, q_2 \, (q_2)_{t,t} + 2 \, (q_2)_t^2 - 1 & = & 0\,,\\[0.2em]
\mbox{\boldmath $q_1 - q_2$} & \mbox{\boldmath $=$} & \mbox{\boldmath $0$}\,,\\[0.2em]
q_2 & \neq & 0
\end{array}
\right.
\]
\[
\mbox{\labexfivethomassecond}\qquad
\left\{
\begin{array}{rcl}
2 \, q_2 \, (q_2)_{t,t} + 2 \, (q_2)_t^2 - 1 & = & 0\,,\\[0.2em]
\mbox{\boldmath $q_1 + q_2$} & \mbox{\boldmath $=$} & \mbox{\boldmath $0$}\,,\\[0.2em]
q_2 & \neq & 0
\end{array}
\right.
\]
\[
\mbox{\labexfivethomasthird}\qquad
\left\{
\begin{array}{rcl}
q_1 & = & 0\,,\\[0.2em]
q_2 & = & 0\,.
\end{array}
\right.
\]
Note that the local Lagrangian constraints in the simple systems $(T_1)$ and $(T_2)$ can be combined in a single global constraint \mbox{\boldmath $q_1^2 - q_2^2=0$}\,.
\end{ex}

\medskip
\begin{ex}\label{ex:ex6} The double sombrero model (\,\citep{ZhaoYuXu}, Equation~7\,).
Its Lagrangian is given by
\[
L =
\frac{1}{4} \left( q_1^2 \, (q_2)_t^2 + (q_1)_t^2 - k \right)^2 +
\frac{1}{2} \, \mu \, q_1^2 - \frac{1}{4} \, \lambda \, q_1^4\,.
\]
The independent variable is $t$, the dependent variables
are $y^1 = q_1$, $y^2 = q_2$, and $k$, $\lambda$, and $\mu$ are non-zero parameters.
We choose the ranking $\succ$ such that
\[
q_2 \prec q_1 \prec (q_2)_t \prec (q_1)_t \prec
(q_2)_{t,t} \prec (q_1)_{t,t} \prec \ldots
\]
The Euler-Lagrange equations read
\begin{equation}\label{eq:expde6}
\left\{ \begin{array}{rcl}
2 \left( (q_1)_t \, \underline{(q_1)_{t,t}} + q_1 \, (q_1)_t \, (q_2)_t^2 +
q_1^2 \, (q_2)_t \, (q_2)_{t,t} \right) (q_1)_t \, + & & \\
\left( (q_1)_t^2 + q_1^2 \, (q_2)_t^2 - k \right)
\left( \underline{(q_1)_{t,t}} - q_1 \, (q_2)_t^2 \right) - \mu \, q_1 +
\lambda \, q_1^3 & = & 0\,,\\[0.5em]
2 \left( (q_1)_t \, \underline{(q_1)_{t,t}} + q_1 \, (q_1)_t \, (q_2)_t^2 +
q_1^2 \, (q_2)_t \, (q_2)_{t,t} \right) q_1^2 \, (q_2)_t \, + & & \\
\left( (q_1)_t^2 + q_1^2 \, (q_2)_t^2 - k \right)
\left( 2 \, q_1 \, (q_1)_t \, (q_2)_t + q_1^2 \, (q_2)_{t,t} \right) & = & 0\,.
\end{array} \right.
\end{equation}
The Hessian is
\[
H^{(1)} = \left(
\begin{array}{cc}
q_1^2 \, (q_2)_t^2 + 3 \, (q_1)_t^2 - k &
2 \, q_1^2 \, (q_1)_t \, (q_2)_t \\[0.5em]
2 \, q_1^2 \, (q_1)_t \, (q_2)_t &
2 \, q_1^4 \, (q_2)_t^2 + \left( q_1^2 \, (q_2)_t^2 + (q_1)_t^2 - k \right) q_1^2
\end{array}
\right)
\]
and its determinant is
\[
\det H^{(1)} = q_1^2
\left( q_1^2 \, (q_2)_t^2 + (q_1)_t^2 - k \right)
\left( 3 \, q_1^2 \, (q_2)_t^2 + 3 \, (q_1)_t^2 - k \right).
\]
There are solutions of the Euler-Lagrange equations (\ref{eq:expde6}) such that
the Hessian does not vanish. Therefore, the model is not singular, and there
are no global Lagrangian constraints.

Let $A$ and $B$ denote the first and second equation in (\ref{eq:expde6}),
respectively. We have
\[
\init(A) = q_1^2 \, (q_2)_t^2 + 3 \, (q_1)_t^2 - k\,, \qquad
\init(B) = 2 \, q_1^2 \, (q_1)_t \, (q_2)_t\,.
\]
Under the assumption that $\init(A)$ and its separant $6 \, (q_1)_t$
do not vanish on the
solution set of the system, Thomas' algorithm replaces $B$ with the
equation $\init(A) \, B - \init(B) \, A = 0$, whose leader is $(q_2)_{t,t}$
and whose initial is
\[
q_1 \, \left( q_1^2 \, (q_2)_t^2 + (q_1)_t^2 - k \right)
\left( 3 \, q_1^2 \, (q_2)_t^2 + 3 \, (q_1)_t^2 - k \right).
\]
The coefficient of $(q_2)_{t,t}$ in $A$ is $2 \, q_1^2 \, (q_1)_t \, (q_2)_t$.
Therefore, the algorithm subsequently replaces $A$ with
\[
\left( q_1^2 \, (q_2)_t^2 + (q_1)_t^2 - k \right)
\left( 3 \, q_1^2 \, (q_2)_t^2 + 3 \, (q_1)_t^2 - k \right) A +
2 \, q_1 \, (q_1)_t \, (q_2)_t \, B = 0\,,
\]
whose left hand side is (exactly) divisible by the initial
$q_1^2 \, (q_2)_t^2 + 3 \, (q_1)_t^2 - k$ of the previous equation~$A$.
Under the above assumption, we may divide by this expression and
replace the left hand side with the quotient.
Since the leaders $(q_1)_{t,t}$ and $(q_2)_{t,t}$ of the new equations $A$ and $B$
are derivatives of distinct unknown functions, no integrability check is
necessary. Under the assumption the initials of these equations and
the discriminant of these initials (which is a certain multiple of
$(q_1^2 \, (q_2)_t^2 - k) (3 \, q_1^2 \, (q_2)_t^2 - k)$) do not vanish,
we obtain the simple differential system
\labexsixthomasfirst\
below. The different cases of vanishing initials or discriminants have to be
treated and this yields four other simple differential systems
\labexsixthomassecond--\labexsixthomasfifth.
Note also that two pairs of factors of the inequation in
\labexsixthomasfirst\
become equal if $(q_2)_t$ is specialized to zero, which is the reason for a
case distinction leading to
\labexsixthomassecond.
We obtain the following tree of case distinctions
\begin{center}
\begin{tikzpicture}[node distance=2cm]
\node (PDE) {(\ref{eq:expde6})};
\node (TX) [below of=PDE] {};
\node (TW) [left of=TX] {};
\node (T3) [right of=TX] {\labexsixthomasthird}; 
\node (TA) [below of=TX] {};
\node (T0) [left of=TA] {};
\node (TV) [left of=T0] {};
\node (T2) [below of=TV] {\labexsixthomassecond}; 
\node (T1) [left of=T2] {\labexsixthomasfirst}; 
\node (TB) [right of=T2] {};
\node (T5) [right of=TB] {\labexsixthomasfifth}; 
\node (T4) [right of=T5] {\labexsixthomasfourth}; 
\draw (PDE)  --  (TW) node [near start,left]
{\small $q_1 \neq 0 \phantom{x}$};
\draw (PDE)  --  (T3) node [near start,right]
{\small $\phantom{x} q_1 = 0$};
\draw (TW)  --  (TV) node [near start,left]
{\small $(q_1)_t \neq 0 \phantom{x}$};
\draw (TA)  --  (T5) node [midway,left]
{\small $q_1^2 \, (q_2)_t^2 - k = 0$};
\draw (TA)  --  (T4) node [midway,right]
{\small $\phantom{x} 3 \, q_1^2 \, (q_2)_t^2 - k = 0$};
\draw (TW)  --  (TA) node [near start,right]
{\small $\phantom{x} (q_1)_t = 0$};
\draw (TV)  --  (T1) node [near start,left]
{\small $(q_2)_t \neq 0 \phantom{x}$};
\draw (TV)  --  (T2) node [midway,right]
{\small $(q_2)_t = 0$};
\end{tikzpicture}
\end{center}
where we have only displayed cases leading to consistent systems and where
\labexsixthomasfirst, \ldots, \labexsixthomasfifth\
are the simple differential systems of the following Thomas decomposition
of (\ref{eq:expde6}) with local Lagrangian constraints marked in boldface:
\[
\mbox{\labexsixthomasfirst}\qquad
\left\{
\begin{array}{rcl}
- \left( q_1^2 \, (q_2)_t^2 + (q_1)_t^2 - k \right)
\left( 3 \, q_1^2 \, (q_2)_t^2 + 3 (q_1)_t^2 - k \right)
\underline{(q_1)_{{t,t}}} \, + & & \\[0.15em]
3 \, q_1 \, (q_2)_{{t}}^{2} \, (q_1)_{{t}}^{4} +
q_1 \left( 6\, q_1^2 \, (q_2)_{{t}}^{4}
-4\, k\, (q_2)_{{t}}^{2} -\lambda\, q_1^2
+ \mu \right) (q_1)_{{t}}^{2} \, + & & \\[0.15em]
3 \, q_1^5 \, (q_2)_{{t}}^{6} - 4 \, k \, q_1^{3} \, (q_2)_{{t}}^{4}
- q_1 \left( 3\, \lambda\, q_1^4 - 3\,\mu \, q_1^2 - k^2 \right) (q_2)_{{t}}^{2} \, + & & \\[0.15em]
k \, \lambda\, q_1^{3}-k \, \mu \, q_1 & = & 0\,,\\[0.3em]
\left( q_1^2 \, (q_2)_t^2 + (q_1)_t^2 - k \right)
\left( 3 \, q_1^2 \, (q_2)_t^2 + 3 \, (q_1)_t^2 - k \right)
q_1 \, \underline{(q_2)_{{t,t}}} \, + & & \\[0.15em]
6 \, (q_2)_{{t}} \, (q_1)_{{t}}^{5} +
4 \left( 3\, q_1^{2} \, (q_2)_{{t}}^{2}-2\, k \right) (q_2)_t \, (q_1)_t^{3} \, + & & \\[0.15em]
\left( 6\, q_1^{4} \, (q_2)_{{t}}^{5}-8\, k \, q_1^{2} \, (q_2)_{{t}}^{3}
-2 \left( \lambda \, q_1^{4} - \mu \, q_1^{2} - {k}^{2} \right) (q_2)_{{t}}
 \right) (q_1)_{{t}} & = & 0\,,\\[0.3em]
q_1 \, (q_2)_t \left( q_1^2 \, (q_2)_t^2-k \right)
\left( q_1^2 \, (q_2)_t^2 + (q_1)_t^2 - k \right) \\[0.15em]
\left( q_1^2 \, (q_2)_t^2 + 3 \, (q_1)_t^2 - k \right)
\left( 3 \, q_1^2 \, (q_2)_t^2 - k \right)
\left( 3 \, q_1^2 \, (q_2)_t^2 + 3 \, (q_1)_t^2 - k \right) & \neq & 0\,.
\end{array}
\right.
\]
Note that the initials of the left hand sides of the above two equations
divide $\det H^{(1)}$.
\[
\mbox{\labexsixthomassecond}\qquad
\left\{
\begin{array}{rcl}
\left( 3 \, (q_1)_t^2 - k \right) (q_1)_{t,t} +
\lambda \, q_1^3 - \mu \, q_1 & = & 0\,,\\[0.2em]
\mbox{\boldmath $(q_2)_t$} & \mbox{\boldmath $=$} & \mbox{\boldmath $0$}\,,\\[0.2em]
q_1 \left( (q_1)_t^2-k \right) \left( 3 \, (q_1)_t^2-k \right) & \neq & 0
\end{array}
\right.
\]
\[
\mbox{\labexsixthomasthird}\qquad
\Big\{
\begin{array}{rcl}
\mbox{\boldmath $q_1$} & \mbox{\boldmath $=$} & \mbox{\boldmath $0$}
\end{array}
\]
\[
\mbox{\labexsixthomasfourth}\qquad
\left\{
\begin{array}{rcl}
\mbox{\boldmath $9 \, \lambda \, q_1^4-9 \, \mu \, q_1^2+2 \, k^2$}
& \mbox{\boldmath $=$} & \mbox{\boldmath $0$}\,,\\[0.2em]
\mbox{\boldmath $3 \, q_1^2 \, (q_2)_t^2 - k$}
& \mbox{\boldmath $=$} & \mbox{\boldmath $0$}
\end{array}
\right.
\]
\[
\mbox{\labexsixthomasfifth}\qquad
\left\{
\begin{array}{rcl}
\mbox{\boldmath $\lambda \, q_1^2 - \mu$} & \mbox{\boldmath $=$} & \mbox{\boldmath $0$}\,,\\[0.2em]
\mbox{\boldmath $\mu \, (q_2)_t^2 - k \, \lambda$} & \mbox{\boldmath $=$} & \mbox{\boldmath $0$\,.}
\end{array}
\right.
\]
\end{ex}

\begin{rem}
We note that in our examples Thomas' algorithm is applied to
a system of nonlinear differential equations which are linear in their leaders, i.e.,
in their highest derivatives with respect to the chosen ranking $\succ$.
Therefore, one of the first steps in a computation of a differential Thomas
decomposition for the given PDE system corresponds to the row
reduction of the corresponding Hessian, which is part of the approach based
on linear algebra recalled in Section~\ref{sec:linearalgebra}.
\end{rem}

\section{Conclusion}\label{sec:conclusion}

In this paper we have shown that differential Thomas decomposition applied to the Euler-Lagrange equations of field-theoretical (\ref{field}) or mechanical (\ref{mechanics}) models with polynomial La\-grangi\-ans is a proper algorithmic tool, implemented in Maple~\citep{Packages}, for computing algebraically independent Lagrangian constraints or generalized Lagrangian constraints, respectively.  In doing so, the decomposition being disjoint generates a partition of the solution space of the Euler-Lagrange equations. In the case of a singular or locally singular model the complete set of the corresponding local and algebraically independent constraints is computed for every element in the partition.

One can also apply another characteristic decomposition of the radical differential ideal generated by the Euler-Lagrange equations, for example, that based on the Rosenfeld-Gr\"{o}bner algorithm and implemented  in the {\tt diffalg} library\footnote{Starting from Maple 14, the {\tt diffalg} library was redesigned and renamed to {\tt DifferentialAlgebra}.}\,of Maple (cf.~\citep{Hubert} and its bibliography). Maple has one more built-in splitting algorithm~\citep{Reid} for nonlinear systems of differential equations (command {\tt rifsimp}, a part of the package {\tt DEtools}), that can also be used for the detection of Lagrangian constraints. However, {\tt diffalg} and {\tt rifsimp} do not yield disjoint decompositions in general, and the Lagrangian constraints that are inherent to different output subsystems may interfere.

The ranking (\ref{eq:conditionranking}) is a Riquier ranking~\citep{Riquier,Rust} (cf.\ also~\citep{Seiler}). Thus, by the Riquier existence theorems~\citep{Thomas1,Rust1}, a simple differential subsystem with nonempty equation set provides the existence and uniqueness of formal power series solution  satisfying certain initial (Cauchy) data (cf.~\citep{Seiler,Gerdt10}). If the simple system under consideration is singular, and hence contains (generalized) Lagrangian constraints, then their presence is to be taken into account by the initial data, and the second-order derivatives in time (`accelerations') are uniquely defined.

Differential Thomas decomposition can also be applied to detect and compute hidden constraints in differential-algebraic (and also in partial differential-algebraic) equations (DAEs). The presence of hidden algebraic constraints, i.e., those algebraic constraints that are not explicitly given in the system, is the main obstacle in numerical solving of DAEs (cf., for example,~\citep{KunkelMehrmann}).

\section{Acknowledgements}

The contribution of the first author (V.P.G.) was partially supported by grant No.13-01-00668 from the Russian Foundation for Basic Research.

\bigskip




\bibliographystyle{elsarticle-num-names}
\bibliography{LagrConstr}





\end{document}